\newtheorem{thm}{Theorem}[section]
\newtheorem{lem}[thm]{Lemma}
\theoremstyle{definition}
\newtheorem{rem}[thm]{Remark}
\newtheorem{qu}[thm]{Question}
\numberwithin{equation}{section}
\begin{document}

\title[Compact groups]{Compact groups with a set of positive Haar measure satisfying a nilpotent law}%

\author[A. Abdollahi]{Alireza Abdollahi}%
\address{Department of Pure Mathematics, Faculty of Mathematics and Statistics, University of Isfahan, Isfahan 81746-73441, Iran.} 
\email{a.abdollahi@math.ui.ac.ir}%
\author[M. Soleimani Malekan]{Meisam Soleimani Malekan}%
\address{Department of Mathematics, University of Isfahan, Isfahan 81746-73441, Iran; Institute for Research in Fundamental Sciences, School of Mathematics, Tehran, Iran.} 
\email{msmalekan@gmail.com}

%\thanks{The first author is grateful to National Elite Foundation of Iran for financial support.}
\subjclass[2010]{20E18; 20P05}%
\keywords{Compact groups, subsets with positive Haar measure, $2$-step nilpotent groups}%

%\date{}%
%\dedicatory{}%
%\commby{}%
% ----------------------------------------------------------------
\begin{abstract}
	The following question is proposed in \cite[Question 1.20]{MTVV}.\\
Let $G$ be a compact group, and suppose that ${\mathcal N}_k(G) = \{(x_1,\dots, x_{k+1}) \in G^{k+1} \;|\; 	[x_1, \dots, x_{k+1}] = 1\}$ has positive Haar measure in $G^{k+1}$. Does $G$ have an open $k$-step nilpotent subgroup?\\
The case $k=1$ is already known. We positively answer it for $k=2$. 	
\end{abstract}

\maketitle
% ----------------------------------------------------------------
\section{\bf Introduction and Results}

Let $G$ be a (Hausdorff) compact group. Then $G$ has a unique normalized Haar measure denoted by ${\mathbf m}_G$. The following question is proposed in \cite{MTVV}.

\begin{qu}\cite[Question 1.20]{MTVV} \label{Q-MTVV}
	Let $G$ be a compact group, and suppose that ${\mathcal N}_k(G) = \{(x_1,\dots, x_{k+1}) \in G^{k+1} \;|\; 	[x_1, \dots, x_{k+1}] = 1\}$ has positive Haar measure in $G^{k+1}$. Does $G$ have an open $k$-step nilpotent subgroup?	
\end{qu}

Positive answer to Question \ref{Q-MTVV} is known for $k=1$ (see \cite[Theorem 1.2]{HR}). It follows from  \cite[Theorem 1.19]{MTVV} that Question \ref{Q-MTVV} has positive answer for arbitrary $k$ whenever we further assume that $G$ is totally disconnected i.e. $G$ is a profinite group. 
%
%Theorem 1.19 of \cite{MTVV} answers \cite[Problem 3.1]{S}.
%
Here we answer positively Question \ref{Q-MTVV} for $k=2$ (see Theorem \ref{class2} below).

\section{\bf A preliminary lemma}
We need the following lemma in the proof of our main result. 
\begin{lem}\label{2step}
	Suppose that $G$ is a group and  $x_1,x_2,x_3,g_1,g_2,g_3 \in G$ are  such that 
	\begin{align*}
		1&\overset{1}{=}[x_1,x_2,x_3]\overset{2}{=}[x_1g_1,x_2g_2,x_3g_3]\overset{3}{=}[x_1g_1,x_2g_2,x_3]\overset{4}{=}[x_1g_1,x_2,x_3g_2]\\
		&\overset{5}{=}[x_1g_1,x_2,x_3]\overset{6}{=}[x_1g_1,x_2,x_3g_3]\overset{7}{=}[x_1,x_2,x_3g_1]\overset{8}{=}[x_1,x_2g_2,x_3g_1]\\
		&\overset{9}{=}[x_1,x_2g_2,x_3]\overset{10}{=}[x_1,x_2,x_3g_2]\overset{11}{=}[x_1,x_2g_2,x_3g_3]\overset{12}{=}[x_1,x_2,x_3g_3].
\end{align*}
	Then $[g_1,g_2,g_3]=1$. 
\end{lem}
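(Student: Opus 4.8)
The plan is to read each of the twelve hypotheses as a statement that a \emph{double} commutator centralizes one of the $g_i$, and then to peel the commutators apart one slot at a time using the standard expansion identities
\[
[a,bc]=[a,c]\,[a,b]^{c},\qquad [ab,c]=[a,c]^{b}\,[b,c].
\]
The first identity is the workhorse: if $[w,x_3]=1$ then $[w,x_3g]=[w,g]\,[w,x_3]^{g}=[w,g]$, so from a ``base'' relation $[w,x_3]=1$ together with $[w,x_3g]=1$ I immediately extract $[w,g]=1$. Taking $w$ to be $[x_1,x_2]$, $[x_1g_1,x_2]$, $[x_1,x_2g_2]$ and $[x_1g_1,x_2g_2]$, I pair the four relations whose last entry is $x_3$ (the ones numbered $1,5,9,3$) with the eight relations in which some $g_i$ has been appended to $x_3$. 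This consumes each of the twelve hypotheses exactly once and yields eight centralizing relations of the form $[\,\cdot\,,\,\cdot\,,g_i]=1$.

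Concretely, the eight consequences are: $[x_1,x_2]$ commutes with $g_1,g_2,g_3$; $[x_1g_1,x_2]$ commutes with $g_2,g_3$; $[x_1,x_2g_2]$ commutes with $g_1,g_3$; and $[x_1g_1,x_2g_2]$ commutes with $g_3$. The next step is to strip the first and middle slots. Since $[x_1,x_2]^{g_1}=[x_1,x_2]$, I expand $[x_1g_1,x_2]=[x_1,x_2]\,[g_1,x_2]$, and the elementary fact that in a product $uv$ centralizing $z$ the centralizing of $z$ passes between $u$ and $v$ lets me transfer commutation with $g_2,g_3$ from $[x_1g_1,x_2]$ to $[g_1,x_2]$. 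Symmetrically, from $[x_1,x_2g_2]=[x_1,g_2]\,[x_1,x_2]$ I deduce that $[x_1,g_2]$ commutes with $g_1$ and $g_3$.

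Finally I expand the remaining double commutator in full. Every conjugation produced by the two identities is by some $g_i$ against a commutator already known to centralize that $g_i$, so all correction terms collapse and
\[
[x_1g_1,x_2g_2]=[x_1,g_2]\,[x_1,x_2]\,[g_1,g_2]\,[g_1,x_2].
\]
Here $[x_1,g_2]$, $[x_1,x_2]$ and $[g_1,x_2]$ are all known to commute with $g_3$, while the whole product commutes with $g_3$ by the one surviving centralizing relation; cancelling the outer factors forces the middle factor $[g_1,g_2]$ to commute with $g_3$ as well, which is precisely $[g_1,g_2,g_3]=1$.

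I expect the only genuine obstacle to be the conjugation terms generated by $[ab,c]=[a,c]^{b}[b,c]$, which in an arbitrary group do not simplify on their own. The whole point of the argument — and, I suspect, the reason the twelve relations are chosen exactly as they are — is that each conjugation meets a commutator already shown to centralize the relevant $g_i$, so the corrections vanish in the precise order in which the slots are peeled. Managing this ordering, rather than any single computation, is where care is required.
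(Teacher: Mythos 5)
Your proof is correct and is essentially the paper's argument reorganized: the paper extracts exactly the same eight centralizing relations, from exactly the same pairings of the twelve hypotheses, and then peels $[x_1g_1,x_2g_2,g_3]=1$ stepwise --- first to $[x_1g_1,g_2,g_3]=1$, then (using $[x_1,g_2,g_1]=1$ to remove the conjugation in $[x_1g_1,g_2]=[x_1,g_2]^{g_1}[g_1,g_2]$ and $[x_1,g_2,g_3]=1$ to strip the left factor) down to $[g_1,g_2,g_3]=1$. Your only deviation is organizational: instead of this stepwise reduction you expand $[x_1g_1,x_2g_2]$ in full to $[x_1,g_2][x_1,x_2][g_1,g_2][g_1,x_2]$ and cancel the outer factors under conjugation by $g_3$, at the price of deriving the two extra facts that $[g_1,x_2]$ centralizes $g_2$ and $g_3$, which the paper's route never needs.
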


\begin{proof}
We will throughout  using famous commutator calculus identities.  
\begin{align*}
	1&=[x_1g_1,x_2g_2,g_3]=[[x_1g_1,g_2][x_1g_1,x_2]^{g_2},g_3] \;\; \text{\rm by (2) and (3)}\\
	&=[[x_1g_1,g_2][x_1g_1,x_2],g_3]           \;\; \textbf{\rm by (4) and (5)} \\
	&=[x_1g_1,g_2,g_3]=[[x_1,g_2]^{g_1}[g_1,g_2],g_3] \;\; \textbf{\rm by (5) and (6). \;\; (I)} 
	\end{align*}
On the other hand, 
\begin{align*}
1&=[x_1,x_2g_2,g_1] \;\; \text{\rm by (8) and (9)}\\
&=[[x_1,g_2][x_1,x_2]^{g_2},g_1]=[[x_1,g_2][x_1,x_2],g_1] \;\; \text{by (1) and (10)}\\
&=[x_1,g_2,g_1] \;\; \text{by (1) and (7). \;\; (II)}
\end{align*} 
Also,
\begin{align*}
	1&=[x_1,x_2g_2,g_3] \;\; \text{\rm by (9) and (11)}\\
	&=[[x_1,g_2][x_1,x_2]^{g_2},g_3]=[[x_1,g_2][x_1,x_2],g_3] \;\; \text{by (1) and (10)}\\
	&=[x_1,g_2,g_3] \;\; \text{by (1) and (12). \;\; (III)}
\end{align*}   
Now it follows from   (I), (II) and (III) that $[g_1,g_2,g_3]=1$. 	
\end{proof}
\begin{rem}
	The ``left version" ($g_ix_j$ instead of $x_jg_i$) of Lemma \ref{class2} is not clear to hold. 
	The validity of a similar result to Lemma \ref{class2} for commutators with length more than $3$ is also under question.  
\end{rem}
\section{\bf Compact groups with many elements satisfying the $2$-step nilpotent law}
 
 We need the ``right version" of \cite[Theorem 2.3]{SAE} as follows.
\begin{thm}\label{rv}
If $A$ is a measurable subset with positive
	Haar measure in a compact group $G$, then for any positive integer $k$ there exists an open subset $U$ of $G$ containing $1$ such that 
	$\mathbf{m}_G(A \cap Au_1 \cap \cdots  \cap Au_k) >0$ for all $u_1,\dots,u_k \in U$.
\end{thm}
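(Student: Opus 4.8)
The plan is to control how far a small right translation can push $A$ off itself, and then to combine the finitely many translates by a union bound. Write $\alpha = \mathbf{m}_G(A) > 0$. The starting point is the elementary identity, valid since $A\cap\bigcap_i Au_i$ and $A\setminus\bigcap_i Au_i$ partition $A$,
\[
\mathbf{m}_G\pr{A \cap Au_1 \cap \cdots \cap Au_k} = \alpha - \mathbf{m}_G\pr{A \setminus \bigcap_{i=1}^k Au_i} \ge \alpha - \sum_{i=1}^k \mathbf{m}_G(A \setminus Au_i),
\]
where the inequality uses $A\setminus\bigcap_i Au_i=\bigcup_i(A\setminus Au_i)$ together with subadditivity. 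Consequently it suffices to produce an open $U\ni 1$ for which $\mathbf{m}_G(A\setminus Au) < \alpha/k$ whenever $u\in U$: then each of the $k$ summands is strictly below $\alpha/k$, their sum is strictly below $\alpha$, and the right-hand side above is strictly positive.

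Next I would reduce the required estimate to continuity of right translation in $L^1$. Writing $\chi_E$ for the indicator function and $\norm{\cdot}_1$ for the $L^1(G,\mathbf{m}_G)$-norm, one has $\mathbf{m}_G(A\setminus Au)\le \mathbf{m}_G(A\triangle Au)=\norm{\chi_{Au}-\chi_A}_1$, so it is enough to show $\norm{\chi_{Au}-\chi_A}_1\to 0$ as $u\to 1$. To prove this, fix $\eps>0$; by regularity of the Haar measure $C(G)$ is dense in $L^1(G)$, so choose a continuous $f$ with $\norm{\chi_A-f}_1<\eps/3$. Since $G$ is compact, $f$ is uniformly continuous, whence $\norm{R_uf-f}_\infty\to 0$ and therefore $\norm{R_uf-f}_1\to 0$ as $u\to 1$, where $R_u$ denotes right translation. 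Because the Haar measure of a compact group is bi-invariant, $R_u$ is an $L^1$-isometry, and the triangle inequality gives
\[
\norm{\chi_{Au}-\chi_A}_1 \le \norm{R_u\chi_A-R_uf}_1 + \norm{R_uf-f}_1 + \norm{f-\chi_A}_1 < \tfrac{2\eps}{3} + \norm{R_uf-f}_1,
\]
which is below $\eps$ for $u$ in a suitable neighborhood of $1$. Applying this with $\eps=\alpha/k$ yields the desired $U$, and the union bound of the first paragraph finishes the argument.

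The only genuine content is the continuity of translation, and the subtlety I expect to watch is that it rests on two features special to the compact case: bi-invariance (compact groups are unimodular), needed so that $R_u$ is an isometry on $L^1$, and regularity, needed for the continuous approximation. An alternative that sidesteps even this is to deduce the statement directly from \cite[Theorem 2.3]{SAE}: the inversion map $x\mapsto x^{-1}$ preserves $\mathbf{m}_G$ and carries $A\cap\bigcap_i Au_i$ onto $A^{-1}\cap\bigcap_i u_i^{-1}A^{-1}$, since $(Au_i)^{-1}=u_i^{-1}A^{-1}$. Applying the ``left version'' to $A^{-1}$ with $v_i=u_i^{-1}$ to obtain an open $V\ni 1$, and then taking $U=V^{-1}$, gives $\mathbf{m}_G(A\cap\bigcap_i Au_i)=\mathbf{m}_G(A^{-1}\cap\bigcap_i v_iA^{-1})>0$ for all $u_i\in U$, as required.
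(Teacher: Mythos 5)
Your proposal is correct, but your main argument is genuinely different from the paper's. The paper disposes of the theorem in a few lines by pure symmetry: since Haar measure on a compact group is inversion-invariant (so $\mathbf{m}_G(A^{-1})=\mathbf{m}_G(A)>0$), it applies the left-translate result \cite[Theorem 2.3]{SAE} to $A^{-1}$ to get an open $V\ni 1$, shrinks to a symmetric open neighborhood $U=U^{-1}\subseteq V$ of $1$ via \cite[Theorem 4.5]{HS}, and observes that $(A\cap Au_1\cap\cdots\cap Au_k)^{-1}=A^{-1}\cap u_1^{-1}A^{-1}\cap\cdots\cap u_k^{-1}A^{-1}$, which has positive measure. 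Your first argument instead reproves the statement from scratch: the union bound $\mathbf{m}_G\left(A\cap\bigcap_{i=1}^k Au_i\right)\ge \mathbf{m}_G(A)-\sum_{i=1}^k\mathbf{m}_G(A\setminus Au_i)$ reduces everything to $\mathbf{m}_G(A\setminus Au)<\mathbf{m}_G(A)/k$ near $1$, which you get from continuity of right translation on $L^1(G)$ via density of $C(G)$, uniform continuity on a compact group, and unimodularity (so right translation is an $L^1$-isometry). All the ingredients you flag are exactly the ones needed, and each step checks out, including the identity $A\setminus\bigcap_i Au_i=\bigcup_i(A\setminus Au_i)$ and the bound $\mathbf{m}_G(A\setminus Au)\le\norm{\chi_{Au}-\chi_A}_1$; this is essentially the mechanism underlying \cite[Theorem 2.3]{SAE} itself, so what you gain is self-containedness and an explicit quantitative neighborhood (each $u\in U$ displaces less than $\mathbf{m}_G(A)/k$ of the mass of $A$), at the cost of redoing work the paper simply cites. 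Your closing alternative is, in substance, the paper's actual proof; your one deviation --- taking $U=V^{-1}$ rather than invoking \cite[Theorem 4.5]{HS} for a symmetric neighborhood --- is legitimate, since inversion is a homeomorphism fixing $1$, so $V^{-1}$ is open and contains $1$, and it is if anything a slight streamlining of the paper's argument.
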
 
\begin{proof}
Since $\mathbf{m}_G(A)=\mathbf{m}_G(A^{-1})$, it follows from Theorem 2.3 of \cite{SAE} that there exists an open subset $V$ of $G$ containing $1$ such that
$$\mathbf{m}_G(A^{-1} \cap v_1A^{-1} \cap \cdots  \cap v_kA^{-1})>0$$
for all $v_1,\dots,v_k \in V$. By \cite[Theorem 4.5]{HS}, there exists an open subset $U\subseteq V$ such that $1\in U$ and $U=U^{-1}$. 
Thus for all $u_1,\dots,u_k \in U$ 
\begin{align*}
0<&\mathbf{m}_G(A^{-1} \cap u_1^{-1}A^{-1} \cap \cdots  \cap u_k^{-1}A^{-1})\\
=&\mathbf{m}_G((A \cap Au_1 \cap \cdots \cap Au_k)^{-1})\\
=& \mathbf{m}_G(A \cap Au_1 \cap \cdots \cap Au_k)
\end{align*}
This completes the proof.
\end{proof}
Now we can prove our main result.
\begin{thm}\label{class2}
		Let $G$ be a compact group, and suppose that ${\mathcal N}_2(G) = \{(x_1,x_2, x_3) \in G\times G \times G \;|\; 	[x_1, x_2, x_3] = 1\}$ has positive Haar measure in $G\times G \times G$. Then $G$ has an open $2$-step nilpotent subgroup.
\end{thm}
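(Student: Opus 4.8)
The plan is to leverage the measure-theoretic machinery (Theorem \ref{rv}) to extract, on a set of positive measure, a rich family of "shifted" triples all satisfying the nilpotent law, then feed these into the purely algebraic Lemma \ref{2step} to conclude that commutators $[g_1,g_2,g_3]$ vanish for $g_i$ ranging over a neighborhood of the identity, and finally bootstrap this local triviality into the existence of an open $2$-step nilpotent subgroup.

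Concretely, I would first let $A = \mathcal{N}_2(G) \subseteq G^3$, which has positive Haar measure. The key idea is that Lemma \ref{2step} requires twelve commutator relations to hold simultaneously for a single tuple $(x_1,x_2,x_3)$ together with shifts $g_1,g_2,g_3$. Each of the twelve relations asserts that some explicitly shifted triple lies in $A$. I would therefore apply Theorem \ref{rv} to the set $A$ in the group $G^3$ (or to suitable diagonal/coordinate copies), producing an open neighborhood $U$ of the identity in $G^3$ such that the intersection of $A$ with the relevant eleven right-translates of $A$ still has positive measure, hence is nonempty, for all shift-parameters drawn from $U$. The eleven translations must be chosen so that membership of the base point in all of them encodes exactly relations $(1)$ through $(12)$ of the Lemma. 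Since a positive-measure intersection is in particular nonempty, for every choice of $(g_1,g_2,g_3)$ in a fixed neighborhood of $1$ there exists at least one witnessing $(x_1,x_2,x_3)$ making all twelve hypotheses hold; Lemma \ref{2step} then forces $[g_1,g_2,g_3]=1$.

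The main obstacle, and the step demanding the most care, is arranging the translations correctly. The twelve relations involve the shifts $g_i$ multiplied into \emph{different} coordinates in inconsistent-looking patterns (e.g.\ $g_2$ appears in the third coordinate in relation $(4)$ but the second coordinate in $(9)$), so a naive single application of Theorem \ref{rv} to $A \subseteq G^3$ with independent coordinate shifts will not simultaneously produce all twelve. I would instead reformulate: treat the six group elements $(x_1,x_2,x_3,g_1,g_2,g_3)$ as the ambient variables, express each of the twelve conditions as a polynomial (word) map into $G$, and show that the preimage of the identity under the map sending $(x_1,x_2,x_3)\mapsto$ the $12$-tuple of bracket values has positive measure on the fiber for $(g_1,g_2,g_3)$ near $1$. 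This is precisely where Theorem \ref{rv} applies, once I verify that each condition is a right-translate of $A$ by a word in the $g_i$ that tends continuously to the identity as $(g_1,g_2,g_3)\to 1$.

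Having established that $[g_1,g_2,g_3]=1$ for all $g_1,g_2,g_3$ in some open neighborhood $U$ of $1$, I would let $H = \langle U \rangle$ be the open subgroup generated by $U$. The law $[a,b,c]=1$ holding on a generating neighborhood should propagate to all of $H$ by standard commutator-collection and continuity arguments—here I would invoke that the word map $(a,b,c)\mapsto[a,b,c]$ is continuous and vanishes on a neighborhood, together with the multiplicativity identities of the lower central series, to deduce that the third term $\gamma_3(H)$ of the lower central series is trivial, i.e.\ $H$ is $2$-step nilpotent. Since $U$ is open, $H$ is an open (hence finite-index, as $G$ is compact) subgroup, giving the desired open $2$-step nilpotent subgroup and completing the proof. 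The propagation from local to global nilpotency is the second delicate point, but it is routine given the continuity of commutator words and the fact that $2$-step nilpotency of a group is equivalent to the identity $[[a,b],c]=1$, which is already secured on generators.
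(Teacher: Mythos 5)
Your proposal is correct and follows essentially the same route as the paper: apply Theorem \ref{rv} with $k=11$ to $A=\mathcal{N}_2(G)$ inside the compact group $G^3$, choose eleven right-translates built from coordinates $g_1^{-1},g_2^{-1},g_3^{-1}$ and $1$ so that a witness $(x_1,x_2,x_3)$ in the nonempty intersection satisfies exactly relations (1)--(12) of Lemma \ref{2step}, conclude $[g_1,g_2,g_3]=1$ for all $g_i$ in a symmetric open $U\ni 1$, and take the open subgroup $H=\langle U\rangle$. The one point worth noting is that the ``main obstacle'' you describe is illusory: Theorem \ref{rv} quantifies over \emph{all} tuples $u_1,\dots,u_k\in U$, including correlated ones that repeat the same $g_i$ in different coordinates, so the naive single application you dismissed is precisely what the paper does, and your six-variable reformulation simply reduces back to it.
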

\begin{proof}
Let $X:={\mathcal N}_2(G)$. It follows from Theorem \ref{rv} and \cite[Theorem 4.5]{HS}   that  there exists an open subset $U=U^{-1}$ of $G$ containing $1$ such that 
	$$X \cap X\bar{u}_1 \cap \cdots \cap X\bar{u}_{11}\neq \varnothing \;\;\; (*)$$
	for all $\bar{u}_1,\dots,\bar{u}_{11} \in U\times U \times U$. Now take arbitrary elements $g_1,g_2,g_3 \in U$ and consider
			\begin{align*}
	\bar{u}_1&=(g_1^{-1},g_2^{-1},g_3^{-1}), \bar{u}_2=(g_1^{-1},g_2^{-1},1), \bar{u}_3=(g_1^{-1},1,g_2^{-1})\\
	\bar{u}_4 &=(g_1^{-1},1,1),\bar{u}_5=(g_1^{-1},1,g_3^{-1}), \bar{u}_6=(1,1,g_1^{-1}),\bar{u}_7=(1,g_2^{-1},g_1^{-1})\\
	\bar{u}_8 &=(1,g_2^{-1},1), \bar{u}_9=(1,1,g_2^{-1}), \bar{u}_{10}=(1,g_2^{-1},g_3^{-1}), \bar{u}_{11}=(1,1,g_3^{-1}).
	\end{align*}
By $(*)$, there exists $(x_1,x_2,x_3)\in X$ such that all the following 3-tuples are in $X$.	 
	\begin{align*}
	&(x_1g_1,x_2g_2,x_3g_3),(x_1g_1,x_2g_2,x_3),(x_1g_1,x_2,x_3g_2) \\
	&(x_1g_1,x_2,x_3),(x_1g_1,x_2,x_3g_3),(x_1,x_2,x_3g_1),(x_1,x_2g_2,x_3g_1)\\
	&(x_1,x_2g_2,x_3),(x_1,x_2,x_3g_2),(x_1,x_2g_2,x_3g_3),(x_1,x_2,x_3g_3).
	\end{align*}
Now Lemma \ref{2step} implies that $\langle g_1,g_2,g_3 \rangle$ is nilpotent of class at most $2$. Therefore the subgroup $H:=\langle U \rangle$ generated by $U$ is $2$-step nilpotent. Since $H=\bigcup _{n\in {\mathbb N}} U^n$, $H$ is open in $G$. This completes the proof.  
\end{proof}

\end{document}